\documentclass[english, 11pt]{article}
\usepackage[T1]{fontenc}
\usepackage[latin1]{inputenc}
\usepackage{amssymb, amsmath, amsthm, latexsym, setspace}
\usepackage{hyperref,graphicx}
\usepackage[francais,english]{babel}
\usepackage[margin=1in]{geometry}
\newtheorem{thm}{Theorem}
\newtheorem{lem}{Lemma}

\newtheorem{defi}{Definition}

\newcommand{\Prob}{\mathbb{P}}
\newcommand{\E}{\mathbb{E}}

\newcommand{\eps}{\varepsilon}
\newcommand{\ZZ}{\mathcal{X}}
\newcommand{\lf}{\lfloor}
\newcommand{\rf}{\rfloor}
\newcommand{\set}[1]{\left\{#1\right\}}
\newcommand{\Zdies}{\hbox{ dies out}}
\newcommand{\Zdiest}{\hbox{ d.o.}}
\newcommand{\N}{\mathbb{N}}

\author{Olivier Couronn\'e , Lucas Gerin\\ \scriptsize Universit\'e Paris Ouest Nanterre La D\'efense. Equipe Modal'X.\\ \scriptsize 200 avenue de la R\'epublique. 92000 Nanterre (France).}
\doublespacing

\title{A Branching-selection process related to censored Galton-Walton processes}


\begin{document}

\bibliographystyle{plain}
\maketitle
\selectlanguage{english}
\begin{abstract}
We obtain the asymptotics for the speed of a particular case of a particle system with branching and selection introduced by B\'erard and Gou\'er\'e (2010). 
The proof is based on a connection with a supercritical Galton-Watson process \emph{censored} at a certain level.
\end{abstract}
\selectlanguage{francais}
\begin{abstract}
Nous \'etudions un cas particulier de syst\`eme de particules avec branchement et s\'election introduit par B\'erard et Gou\'er\'e (2010). Nous obtenons l'asympotique pour la vitesse, 
en remarquant un lien avec un processus de Galton-Watson surcritique censur\'e \`a un certain niveau.
\end{abstract}
\selectlanguage{english}
\noindent
{\it MSC: } 60J80

\noindent
{\it Keywords: }Galton-Watson process, censored branching process, branching and selection

\section{Models and Results}
\subsection{The censored Galton-Watson process}
For a probability distribution $\ZZ$ on non-negative integers, the Galton-Watson process with \emph{offspring} $\ZZ$ is the process
defined by $Z_0^a=a$ and
$$
Z^a_{k+1}=\sum_{i=1}^{Z^a_{k}} X_{k,i}
$$
where the $X_{k,i}$'s are i.i.d. copies of $\ZZ$ ($Z_{k+1}^a=0$ if $Z_k^a=0$). We write $(Z_k)=(Z^1_k)$.
We deal here with supercritical Galton-Watson processes (\emph{i.e.} when $\mathbb{E}(\ZZ)>1$) and to avoid trivialities we impose $\ZZ(0)>0$. In this case, it is well-known (we refer to \cite{MR2047480} for basic facts on branching processes) that $Z_k$ dies with a certain probability $0<q<1$ which is the unique solution in $(0,1)$ of
$$
f(x):= \sum_{i\geq 0} \ZZ(i)x^i =x.
$$
The first step of this article will be
to study a kind of constrained Galton-Watson process, in which the constraint is a "roof" at a given height that prevents the process from exploding.
\begin{defi}
Given $N$ a positive integer and a probability distribution $\ZZ$ on integers, 
the Galton-Watson process censored at level $N\geq 2$ with offspring $\ZZ$ is the process $(X_k^N)_{k\geq 0}$ defined by $X^N_0=N$ and, for $k\geq 0$,
$$
X_{k+1}^N=
\begin{cases}
&\min\left(N,\sum_{i=1}^{X_{k}^N}X_{k,i}\right)\text{ if }X_k^N>0,\\
&0 \text{ otherwise,}
\end{cases}
$$
where the $X_{k,i}$'s are i.i.d. copies of $\ZZ$.
\end{defi}

The process $(X_k^N)_{k\geq 0}$ is a finite state Markov process which is typically stuck for a long time on $N$, but eventually dies. 
Let $U_N$ be the \emph{survival time} of the Galton-Watson process :
$$
U_N=\min\{k\geq 0, X_k^N=0\}.
$$
Let us describe heuristically the asymptotic behavior of $U_N$. When the censored process dies, it happens very suddenly: in
the uncensored underlying process, the progenies of the $N$ particles pass away almost simultaneously in a few generations.
This latter event occurs with a probability close to $q^N$, and therefore the censored process is expected to survive a time close to a geometric random variable with parameter $q^N$.

We state this in the following theorem, which may be derived from results of  \cite{MR2761556}.

\begin{thm}\label{th1}
The following convergence holds in law:
\begin{equation}\label{Eq:ConvEnLoi}
U_N q^N\stackrel{N\to \infty}{\rightarrow} \mathcal E(1),
\end{equation}
where $\mathcal E(1)$ stands for the exponential distribution with parameter one. 
\noindent
The convergence also holds in mean:
\begin{equation}\label{Eq:LogEsperance}
\E(U_N) \sim (1/q)^N \hbox{ as }N\rightarrow \infty.
\end{equation}
\end{thm}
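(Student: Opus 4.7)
The plan is to decompose the trajectory of $(X_k^N)$ into independent excursions from state $N$ (using the strong Markov property) and to prove that each excursion is absorbed at $0$ with probability $\sim q^N$. Let $\tau_N^+=\inf\{k\geq 1 : X_k^N\in\{0,N\}\}$ be the length of one excursion and $\beta_N = \Prob_N(X_{\tau_N^+}^N = 0)$ the probability it ends in absorption. If $G$ denotes the number of excursions until absorption, then $G$ is geometric with success parameter $\beta_N$ and $U_N = \sum_{i=1}^G \xi_i$ with i.i.d.\ excursion lengths $\xi_i$. Once I show $\beta_N \sim q^N$ and that the $\xi_i$ are negligible on the scale $1/q^N$, both assertions follow from the standard convergence of a rescaled geometric random variable to an exponential.

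The heart of the argument is the estimate $\beta_N \sim q^N$. The basic ingredient is the martingale $(q^{Z_k})$ for the uncensored Galton--Watson process, which comes from $f(q)=q$. Optional stopping at $T_0 \wedge T_{\geq N}$ starting from $j<N$ gives
\[
q^j - q^N \;\leq\; \gamma_j := \Prob_j(T_0 < T_{\geq N}) \;\leq\; q^j.
\]
Writing $S_N := \sum_{i=1}^N X_{0,i}$ and setting $\gamma_0 = 1$, conditioning on $X_1^N$ yields $\beta_N = \E\bigl[\gamma_{S_N}\mathbf{1}_{S_N < N}\bigr]$. The upper bound is immediate: $\beta_N \leq \E[q^{S_N}] = f(q)^N = q^N$. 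For the matching lower bound I would apply a Harris-type change of measure: tilting by $q^{S_N}/q^N$ replaces $\ZZ$ by the subcritical conjugate $\widetilde\ZZ(j)=q^{j-1}\ZZ(j)$ of mean $f'(q)<1$, so that
\[
\E\bigl[q^{S_N}\mathbf{1}_{S_N < N}\bigr] \;=\; q^N\,\widetilde\Prob(S_N < N).
\]
By the LLN for the tilted (subcritical) sum $\widetilde\Prob(S_N<N)\to 1$, while by the supercritical LLN $\Prob(S_N<N)\to 0$; combining these in the bound $\beta_N \geq \E[q^{S_N}\mathbf{1}_{S_N<N}] - q^N\Prob(S_N<N)$ gives $\beta_N \geq q^N(1+o(1))$.

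The excursion lengths are short on the scale $1/q^N$: with probability $\Prob(S_N\geq N)\to 1$ one has $\xi_i = 1$, and a longer excursion runs the uncensored GW from $S_N\in(0,N)$ until either returning to level $N$ (in $O(\log N)$ steps on survival, because $Z_k$ grows at rate $m^k$) or being absorbed (in $O(\log N)$ steps under the subcritical conditioning on extinction). Large deviations for $\{S_N<N\}$ give $\E[\xi_i-1] = O(\log N\cdot e^{-cN})$ for some $c>0$, and multiplication by $\E[G]\sim q^{-N}$ and then by $q^N$ leaves $o(1)$. The convergence in law $U_N q^N \to \EE(1)$ then follows from $\beta_N G \to \EE(1)$, and the geometric upper tail supplies the uniform integrability needed to upgrade to the mean statement $\E(U_N)\sim(1/q)^N$.

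The main obstacle is the lower bound on $\beta_N$: the martingale estimate alone loses a factor $q^N$, and recovering the sharp leading constant requires the Harris tilting to do two things at once, namely turn the supercritical offspring sum into a subcritical one (so that $\{S_N<N\}$ flips from rare to typical under $\widetilde\Prob$) and absorb the $q^N$ correction via the supercritical LLN under $\Prob$.
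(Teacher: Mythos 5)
Your argument is correct, and while it shares the paper's overall skeleton---decomposing $(X_k^N)$ into excursions from $N$, identifying the number of excursions as a geometric random variable with parameter $\beta_N$ (the paper's $q_N$), and controlling excursion lengths on the $\log N$ scale---your proof of the central estimate $\beta_N\sim q^N$ is genuinely different from the paper's Lemma~\ref{lempN}. The paper compares $\{V_N=0\}$ directly with the extinction event of the uncensored process $Z^N$ and bounds the defect $\Prob\bigl(Z^N\hbox{ dies out and revisits level }\geq N\bigr)$ by splitting on whether $Z_1^N<(1+b)N$, invoking the left-tail bound of Lemma~\ref{lemsimple} and the extinction probability $q^\ell$ at the return level $\ell$. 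You instead use the bounded martingale $q^{Z_k}$ and optional stopping to pinch $\gamma_j$ between $q^j-q^N$ and $q^j$, then tilt the offspring sum $S_N$ by $q^{S_N}/q^N$ so that $\mathcal X$ becomes its subcritical dual with mean $f'(q)<1$, reducing the lower bound to the weak law of large numbers under the two measures. Your route is structurally cleaner and avoids the bespoke large-deviation input, at the cost of delivering only $\beta_N/q^N\to 1$ rather than the paper's explicit exponential rate $1+O(e^{-cN})$, which is sufficient for Theorem~\ref{th1}. Your treatment of the excursion lengths and the passage via Wald's identity to $\E(U_N)\sim q^{-N}$ is a compressed version of the paper's Lemmas~\ref{lemUV}--\ref{lemlim}; to make it airtight you would need to justify, as the paper does at \eqref{eqyk2} via the a.s.\ growth of $Z_k/\mu_\star^k$ and a block argument, that the exit time from $(0,N)$ has an exponential tail on scale $\log N$ uniformly over starting points below $N$.
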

The convergence in law \eqref{Eq:ConvEnLoi} is a consequence of (\cite{MR2761556},Th.1 (3),(4)) and
the asymptotic \eqref{Eq:LogEsperance} follows from (\cite{MR2761556},Th.1 (2)). 
The results of \cite{MR2761556} are actually much accurate.
As we will only need these simple estimates, and for the sake of completeness, we provide in the two following sections an elementary and more concise 
proof of Theorem \ref{th1}. We also mention \cite{MR0267649}, in which the Galton-Watson process is censored by a function depending on time ; the author obtained a criterion for the degeneracy of the process.


\subsection{A connection with a branching-selection process}

The present authors considered Theorem~\ref{th1} when trying to find an asymptotic of the speed of a very particular case of a certain branching particle system with selection, 
studied by B\'erard and Gou\'er\'e (\cite{MR2669438}, Section~7, Theorem~5).
The \emph{branching-selection} process we will study is a generalization of their Bernoulli branching-selection process and can be described as follows. 
For a distribution $(\ZZ, \ZZ')$ on $\N^2$ 
such that the expectation of $\ZZ$ is strictly greater than $1$ and $\ZZ+\ZZ'\geq 1$ a.s., the particle system is the discrete-time particle system 
of $N$ particles moving on $\mathbb{Z}$ and starting from the origin such that, at each time unit,
\begin{enumerate}
\item Each of the $N$ particles is replaced by $\ZZ$ particles just on the right of that particle and by $\ZZ'$ particles on the same position, independently of each other ;
\item Among all these new particles, we keep only the $N$ rightmost particles.
\end{enumerate}

It is convenient to see the process of the locations of the $N$ particles as a finite point measure. We set $Y^N_0=N\delta_0$ and write
$$
Y^N_k = \sum_{\ell\geq 0} \delta_{\ell} Y^N_k(\ell),
$$
where $Y^N_k(\ell)$ is the number of particles at time $k$ at the position $\ell$ ; by construction we have at any time
$\sum_{\ell\geq 0}  Y^N_k(\ell)= N$. We write
$$
\max Y^N_k :=\max \set{\ell \geq 0 ; Y^N_k(\ell)>0}
$$
for the position of the rightmost particle at time $k$.
We have $\max Y_k^N\leq k$ by construction. 
Since $\E(\ZZ)>1$,  and since $Y_{k+1}^N\geq Y_k^N$ by the property of $\ZZ'$, it is likely for $N$ large that $\max Y_k^N$ is close to $k$. 
Remark also that in this process, all the particles at time $k$ are on $\{\max Y^N_k-1, \max Y^N_k\}$ after the selection.

B\'erard and Gou\'er\'e in fact studied the particle system defined as follows. At every time unit 
each particle is duplicated and moves one step forward with probability $\alpha\in(\frac12, 1)$ and stays put with probability $1-\alpha$.
With our notations, this corresponds to the case where $\ZZ$ is a Bernoulli variable $B(2,\alpha)$ 
with $\alpha>\frac12$, and $\ZZ'=2-\ZZ$ (we will denote their model as the \emph{Bernoulli branching-selection process}).
They proved that $(\max Y_k^N)/k$ converges almost surely to a constant $v_N(\alpha)$, and noticed that $v_N(\alpha)$ lies between $(1-\exp(-c_1(\alpha) N))$ 
and $(1-\exp(-c_2(\alpha) N))$ for some positive constants $c_1(\alpha)$ and $c_2(\alpha)$. 
The stress of their paper was a large class of distributions for the walk performed by the particles, 
and the case of the Bernoulli random walk with $\alpha >1/2$ was in fact just mentioned as a degenerate case. 
The existence of the asymptotic speed $v_N(\alpha)$ would apply as well in our case, but not their proof for the bounds on this speed.

The key point for our result is that the process of the number of particles that are at the rightmost possible position $(Y_k^N(k))_k$ 
has the same law as the censored Galton-Watson process $(X^N_k)_k$, when the offspring distribution is $\ZZ$.

\begin{thm}\label{th2} 
For the branching-selection process with the distribution $(\ZZ, \ZZ')$ on $\N^2$ such that $\E(\ZZ)>1$ and $\ZZ+\ZZ'\geq 1$ a.s., we have 
$$
\lim_{N\to\infty} \frac{1-v_N(\alpha)}{q^N}= 1,
$$
where 
$q$ is the extinction probability of a Galton-Watson process whose offspring is $\ZZ$.
\end{thm}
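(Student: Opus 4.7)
The starting point is the key observation stated just before Theorem~\ref{th2}: the process $(Y_k^N(k))_k$ counting particles at the maximal possible position $k$ at time $k$ coincides in law with the censored Galton--Watson process $(X_k^N)_k$. Write $L_k := \max Y_k^N$, $a_k := Y_k^N(L_k)$, and denote by $T_1 < T_2 < \cdots$ the successive ``failure times'' of the leading edge, i.e.\ the indices $k\geq 1$ with $L_k = L_{k-1}$. Since $L_k = k - \#\set{i : T_i \leq k}$, the almost-sure existence of $v_N = \lim_k L_k/k$ (guaranteed by sub-additivity, as in B\'erard--Gou\'er\'e) gives
\[
1 - v_N = \frac{1}{\lim_i T_i / i}.
\]
Moreover $\set{L_k = k} = \set{Y_k^N(k) > 0}$, so the key observation yields $T_1 \stackrel{d}{=} U_N$. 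A direct extension of that same observation, applied from a generic failure time $T_i$ and conditional on $a_{T_i} = a$, shows that $T_{i+1} - T_i$ is distributed as $U_N^{(a)}$, the survival time of the censored Galton--Watson chain (with cap $N$) started from $a$ particles.

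The direction $\liminf_{N\to\infty}(1-v_N)/q^N \geq 1$ is the easy one: since $a_{T_i}\leq N$ and $U_N^{(a)}$ is stochastically non-decreasing in $a$ (standard coupling of Galton--Watson trees), $T_{i+1} - T_i$ is stochastically dominated by $U_N$. Passing to the long-run average via the ergodic theorem applied to the finite-state Markov chain $(a_{T_i})_i$ gives $\lim_i T_i/i \leq \E[U_N]$, and Theorem~\ref{th1} concludes.

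For $\limsup_{N\to\infty}(1-v_N)/q^N \leq 1$, it suffices to show that $a_{T_i}$ concentrates at $N$ as $N\to\infty$. At a failure time $T_i$, $a_{T_i} = \min(N, S)$ with
\[
S = \sum_{\ell=1}^{a_{T_i-1}} \ZZ'_\ell + \sum_{j=1}^{N-a_{T_i-1}} \ZZ_j;
\]
conditioning on the failure forces $\ZZ_\ell = 0$ and therefore $\ZZ'_\ell \geq 1$ for $\ell \leq a_{T_i-1}$. Since $\E[\ZZ] > 1$, a law-of-large-numbers argument then gives $S \geq N$ with probability $1-o(1)$ as $N\to\infty$, uniformly in $a_{T_i-1}$: if $a_{T_i-1}$ is small then the tail sum $\sum \ZZ_j \sim (N-a_{T_i-1})\E[\ZZ] > N$ dominates, while if $a_{T_i-1}$ is large then the bound $\sum \ZZ'_\ell \geq a_{T_i-1}$ suffices. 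Combined with the monotonicity of $U_N^{(a)}$ in $a$, this yields $\lim_i T_i/i \geq (1-o(1))\E[U_N]$, completing the bound.

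The delicate point, and the main obstacle, is making the concentration in the last step uniform enough in $i$ to control the long-run average: one must rule out that rare configurations with $a_{T_i}$ much smaller than $N$ dominate the mean of $U_N^{(a_{T_i})}$, since $\E[U_N^{(a)}]$ can be significantly smaller than $\E[U_N]$ when $a$ is small (of order $(1-q^a)\E[U_N]$).
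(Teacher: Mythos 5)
Your proposal shares the starting point (the identification of $(Y^N_k(k))_k$ with the censored Galton--Watson process) and the renewal idea, but then diverges from the paper and runs into a genuine gap, which you flag yourself. You build renewals at the successive failure times $T_i$ of the leading edge, observe that $T_{i+1}-T_i\mid a_{T_i}=a$ has the law of $U_N^{(a)}$, and then try to conclude by showing $a_{T_i}$ concentrates at $N$. The easy direction (domination $T_{i+1}-T_i\preceq U_N$ gives $1-v_N\geq 1/\E U_N$) is fine and corresponds in substance to the paper's Lemma~\ref{lemvnu}. But your ``hard'' direction requires a \emph{uniform} control of the stationary distribution of the Markov chain $(a_{T_i})_i$: you need $\E_\pi\bigl[\E U_N^{(a)}\bigr]\geq(1-o(1))\E U_N$, i.e.\ roughly $\E_\pi[q^{a}]=o(1)$, and the informal LLN sketch you give (conditioning on the failure event, which forces $\ZZ=0$ hence $\ZZ'\geq 1$ on the $b$ surviving front particles, then estimating the replenishment $S$) is not uniform in $b$, nor is the distribution of $b$ --- the front count one step before failure --- quantified. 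As you note, $\E U_N^{(a)}\approx(1-q^a)\E U_N$, so a bad stationary tail near small $a$ would destroy the asymptotic. This is a real hole, not a routine technicality.

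The paper sidesteps this entirely by a coupling/domination argument (Lemma~\ref{lemvnv}). Instead of renewing at the first failure $U^1$, it renews at $V^1+1$, where $V^1$ is the \emph{last} time all $N$ particles sit at the rightmost possible position, and it introduces a dominated process that at each renewal is reset to $N\delta_{\text{(current max $-1$)}}$. Because the reset configuration is pointwise below the true one and the same driving randomness is reused, the dominated process has genuinely i.i.d.\ inter-renewal times with mean $\E V_N+1$, and the renewal theorem gives $v_N\geq 1-1/(\E V_N+1)$ with no need to understand the post-failure particle count $a_{T_i}$ at all. The symmetric construction at $U^1$ with a dominating process yields $v_N\leq 1-1/\E U_N$. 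Your easy direction is morally the same bound as the paper's upper estimate, but for the lower estimate on $v_N$ the coupling trick is precisely what replaces your missing concentration-of-$a_{T_i}$ step, and without it the proof is incomplete.
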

In particular, for the Bernoulli branching-selection model studied in \cite{MR2669438} with $\alpha>\frac12$,
$$
\lim_{N\to\infty} \frac{1-v_N(\alpha)}{(q_\alpha)^N}= 1,
$$
where 
$$
q_\alpha= \frac{1-2\alpha(1-\alpha)-\sqrt{1-4\alpha(1-\alpha)}}{2\alpha^2}
$$
is the extinction probability of a Galton-Watson process whose offspring is the Binomial distribution with parameters $(2,\alpha$).

Remark that the exact law of $\ZZ'$ has no influence, and we could have always taken $$\ZZ'=1_{\ZZ=0}.$$ 
Using $\ZZ'$ allows us essentially to have a true generalization of the Bernoulli branching-selection process.

The article is organized as follows. The proof of Theorem~\ref{th1} is decomposed over Sections~\ref{sectun} and \ref{secproof}. 
A key ingredient in Section \ref{secproof} is to compare our censored process at the last time it reaches the level $N$, and the classical Galton-Watson process starting at $N$ and conditioned to die.
The main contribution is the application to the branching-selection particle system and is given in Section~\ref{secdeux}. 

\section{Preliminaries}\label{sectun}

We will repeatedly use the following left-tail bound for a sum of copies of $\ZZ$ :
\begin{lem}\label{lemsimple}
Let $X_1,\dots, X_N$ be i.i.d. copies of law $\ZZ$. There exist $b,c>0$ such that, when $N$ is large enough,
$$
P\left(X_1+\dots + X_N\leq N(1+b)\right)\leq \exp(-cN).
$$
\end{lem}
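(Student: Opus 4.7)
The plan is to apply a one-shot Chernoff/Cram\'er bound for the left tail, using the negative exponential $e^{-tX}$ rather than $e^{tX}$ so that finiteness of the moment generating function is automatic from $\ZZ\geq 0$.

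Concretely, write $\mu := \E(\ZZ) > 1$ and choose any $b \in (0,\mu-1)$, for instance $b = (\mu-1)/2$. The Laplace transform
$$
\phi(t) := \E(e^{-t\ZZ})
$$
is then well-defined and bounded by $1$ on $[0,\infty)$, and smooth there. Markov's inequality applied to $e^{-t(X_1 + \cdots + X_N)}$ gives, for every $t > 0$,
$$
P(X_1 + \cdots + X_N \leq N(1+b)) \leq e^{tN(1+b)} \phi(t)^N = \exp\bigl(N\, \Lambda(t)\bigr),
$$
where $\Lambda(t) := t(1+b) + \log \phi(t)$. So the lemma reduces to exhibiting one $t_0 > 0$ with $\Lambda(t_0) < 0$, after which $c := -\Lambda(t_0)$ does the job (in fact for every $N \geq 1$).

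For that final step, I would compute $\Lambda(0) = 0$ and $\Lambda'(0) = (1+b) + \phi'(0)/\phi(0) = (1+b) - \mu < 0$, then invoke continuity of $\Lambda$ near $0$. The only minor item to check carefully is that $\phi'(0) = -\mu$, which follows from dominated convergence: the difference quotients $(1 - e^{-t\ZZ})/t$ increase to $\ZZ$ as $t\downarrow 0$ and are dominated by $\ZZ$, which is integrable since $\mu < \infty$ (the lemma is used in contexts where $\ZZ$ is supercritical with $\mu>1$, and we need $\mu$ finite to make sense of the statement at all). I do not anticipate any real obstacle here: no extra moment assumption on $\ZZ$ is needed beyond $\mu > 1$, precisely because we are bounding a \emph{left} tail of a sum of non-negative variables.
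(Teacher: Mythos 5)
Your argument is correct but proceeds differently from the paper's. The paper truncates: choose $M$ with $\E[\min\set{X_i,M}]>1$ (always possible when $\E(\ZZ)>1$, by monotone convergence), note that $X_1+\cdots+X_N\geq\sum_i\min\set{X_i,M}$, and invoke a standard exponential bound for sums of \emph{bounded} i.i.d.\ variables (\cite{MR0388499}, III Chap.~4). You instead apply a Chernoff bound directly to the negative Laplace transform $\phi(t)=\E(e^{-t\ZZ})$, which is automatically finite on $[0,\infty)$ since $\ZZ\geq 0$, and check that $\Lambda(t)=t(1+b)+\log\phi(t)$ dips below zero for small $t>0$. Both are sound. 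Your route is self-contained, needs no truncation level, and produces a rate $c=-\Lambda(t_0)$ valid for every $N\geq 1$ rather than only for $N$ large. The truncation route has the advantage of covering $\E(\ZZ)=+\infty$ with no change; your side remark that ``we need $\mu$ finite to make sense of the statement'' is not quite right, since the standing hypothesis in the paper is only $\E(\ZZ)>1$. The fix to your computation is cosmetic, though: the difference quotients $(1-e^{-t\ZZ})/t$ increase monotonically to $\ZZ$ as $t\downarrow 0$, so the identification $-\phi'(0^+)=\mu$ (possibly $+\infty$) is a monotone-convergence fact rather than a dominated-convergence one, and in either case $\Lambda(t)<0$ for some small $t>0$, which is all you need.
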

To prove this, we choose $M$ such that $\mathbb{E}[\min\set{X_1,M}]>1$. We then can apply (\cite{MR0388499}, III Chap.4) to the bounded random variables $\min\set{X_i,M}$ and we get the desired bound.
This implies in particular that
\begin{equation}\label{Eq:majoX1}
\mathbb{P}(X_{k+1}^N<N\ |\ X_k^N=N)\leq \exp(-cN).
\end{equation}

Note that throughout the paper, $c$ stands for a generic positive constant, which might differ at each appearance.
We introduce the last time for which our process is equal to $N$. This variable will turn out to be equivalent to $U_N$ and more 
simple to approximate by a geometric variable.
\begin{defi}
Let 
$$
V_N=\max \{k\geq 0, X_k^N=N\},
$$
and let $q_N$ be the probability that $(X_k^N)_{k}$ does not ever hit $N$ after time zero:
$$
q_N=\Prob(X_k^N<N, \forall k>0)=\Prob(V_N=0).
$$
\end{defi}
We deduce from \eqref{Eq:majoX1} that
\begin{equation}\label{Eq:Majo_pN}
q_N\leq \Prob(X_1^N <N) \leq \exp(-c N).
\end{equation}

We will compare $q_N$ with the probability that the classical Galton-Watson process $(Z^N_k)$ starting from $N$ particles dies out. By independence of the progeny of the $N$ particles, we have
$\Prob((Z_k^N)\hbox{ dies out })=q^N$.
By conditioning over the first passage over some level $n$, we get
\begin{equation}
\Prob((Z_k)\hbox{ dies out }\mid \exists k\geq 0\hbox{ such that } Z_k \geq n)\leq q^n.
\end{equation}

\section{Proof of theorem \ref{th1}}\label{secproof}

The main tool of the proof is the following lemma, which says that the Galton-Watson process $(Z_k^N)$,
conditioned to die, roughly behaves like $X_k^N$ after its last passage in $N$.
Recall that $q$ stands for the probability that $Z$ dies while $q_N$ is the probability that $X^N$ does not ever hit $N$.

\begin{lem}\label{lempN}
There exists $c>0$ such that for $N$ large enough
$$
 \left|\frac{q_N}{q^N}-1\right|\leq \exp(-cN).
$$
\end{lem}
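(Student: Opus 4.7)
My plan is to identify the ratio $q_N/q^N$ as the probability that a certain \emph{subcritical} Galton-Watson process, started from $N$ ancestors, never reaches level $N$, and then to bound this tail by a Chernoff-type argument. Recall the Harris-Sevastyanov transformation: conditioning $(Z_k)$ on extinction yields a Galton-Watson process $(\tilde Z_k)$ whose offspring law satisfies $\Prob(\tilde \ZZ = k) = \ZZ(k)\,q^{k-1}$, with generating function $\tilde f(s) = f(qs)/q$ analytic on $[0,1/q)$ and mean $\tilde f'(1) = f'(q) < 1$. If $(X_k^N)$ and $(Z_k^N)$ are coupled through the same offspring variables, a short induction shows $\{V_N = 0\} = \{Z_k^N < N \text{ for all } k \geq 1\}$; since $(Z_k^N)$ is supercritical, staying bounded forces extinction, so
$$
\frac{q_N}{q^N} = \Prob\!\left(\sup_{k \geq 1} \tilde Z_k^N < N\right),
$$
and the lemma reduces to proving $\Prob(\sup_{k \geq 1} \tilde Z_k^N \geq N) \leq \exp(-cN)$.

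\smallskip

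For this I would pick $s \in (1, 1/q)$ with $\tilde f(s) < s$ (such $s$ exists because $\tilde f(1) = 1$ and $\tilde f'(1) < 1$) and set $\rho := \tilde f(s)/s \in (0, 1)$. Since $\E[s^{\tilde Z_{k+1}^N} \mid \tilde Z_k^N = j] = \tilde f(s)^j \leq s^j$, the sequence $(s^{\tilde Z_k^N})_{k \geq 0}$ is a non-negative supermartingale. An exponential Markov inequality gives $\Prob(\tilde Z_1^N \geq N) \leq \rho^N$. For the remaining contribution, condition on $\tilde Z_1^N = j < N$; by optional stopping applied to $(s^{\tilde Z_k^N})$ at $\sigma \wedge n$, where $\sigma := \inf\{k \geq 1 : \tilde Z_k^N \geq N\}$, and letting $n \to \infty$, one has $\Prob(\sigma < \infty \mid \tilde Z_1^N = j) \leq s^{j-N}$. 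Summing over $j<N$,
$$
\Prob(\sigma < \infty,\ \tilde Z_1^N < N) \leq s^{-N}\,\E\!\left[s^{\tilde Z_1^N}\right] = \rho^N,
$$
so $\Prob(\sigma < \infty) \leq 2\rho^N$, which gives the desired $|q_N/q^N - 1| \leq \exp(-cN)$.

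\smallskip

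The main subtlety I anticipate is that the supermartingale estimate applied directly from $\tilde Z_0^N = N$ is trivial, since $s^N$ cannot bound $s^N$ with room to spare. The remedy is to look one generation ahead: the strict subcriticality of $\tilde \ZZ$ gives an immediate gain of $\rho^N$ in exponential moments at time $1$, and a standard Markov decomposition on $\tilde Z_1^N$ then propagates this gain uniformly to all subsequent times. The Harris-Sevastyanov transformation is the bridge that converts a delicate statement about the supercritical process conditioned on a rare event into a routine subcritical large-deviation estimate, and the analyticity of $\tilde f$ on the disc of radius $1/q > 1$ is exactly what guarantees the existence of an admissible exponential tilt $s$.
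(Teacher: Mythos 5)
Your proof is correct, and it takes a genuinely different route from the paper's. You both begin from the same identification $q_N = \Prob(Z_k^N < N,\ \forall k\geq 1)$ under the coupling of $(X^N_k)$ with the uncensored process, but from there the arguments diverge. The paper works directly with the supercritical process: it introduces $R$, the first time $Z^N_k$ returns to $\geq N$, splits on whether $Z_1^N$ is below or above $(1+b)N$, applies the left-tail estimate of Lemma~\ref{lemsimple} to the first branch, and uses $\Prob(Z^N\text{ d.o.}\mid Z^N_R=\ell)=q^\ell$ on the second. Your argument instead passes to the Harris--Sevastyanov conditioned process $\tilde Z^N$, turning $q_N/q^N$ into the probability that a \emph{subcritical} process started from $N$ never re-reaches $N$, and then kills the complementary event with a supermartingale Chernoff bound via $s\mapsto s^{\tilde Z_k^N}$ for a tilt $s\in(1,1/q)$ with $\tilde f(s)<s$ (which exists because $\tilde f$ has radius of convergence $1/q>1$ and $\tilde f'(1)=f'(q)<1$). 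Both routes are elementary and give exponential decay; yours is arguably cleaner in that it bypasses Lemma~\ref{lemsimple} entirely and produces the bound in a single optional-stopping step (in fact you can merge your two cases: $\Prob(\sigma<\infty\mid \tilde Z_1^N=j)\leq s^{j-N}$ already holds trivially for $j\geq N$, so $\Prob(\sigma<\infty)\leq s^{-N}\tilde f(s)^N=\rho^N$ with no factor of $2$), while the paper's argument re-uses an estimate that it needs elsewhere anyway and avoids invoking the conditioned-process transform.
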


\begin{proof}
By Lemma \ref{lemsimple}, we can take $b>0$ and $c>0$ such that
$$
 \Prob(Z_1^N<(1+b)N)\leq \exp(-cN).
$$
The point is that if $(Z_k^N)_k$ dies out, then with high probability it does not hit $N$ after time $k=0$.
Let $R=\inf\set{k\geq 1,Z_k^N\geq N}$ (with the convention that $R=+\infty$ is $Z^n$ does not hit $N$ after time zero),
\begin{eqnarray*}
\lefteqn{\Prob(R<\infty ;\ Z^N \Zdies)}\\
&\leq& \Prob( Z_1^N< (1+b)N;R<\infty ;\ Z^N \Zdies) +\Prob(Z_1^N\geq (1+b)N;\ Z^N \Zdies)\\
&\leq&  \Prob( Z_1^N< (1+b)N;\ R<\infty ;\ Z^N \Zdies) + q^{(1+b)N}\\
&\leq&  \sum_{\ell\geq N} \Prob(Z_R^N=\ell;\ R<+\infty;\ Z_1^N< (1+b)N ;\ Z^N \Zdies) + q^{(1+b)N}\\
&\leq&  \sum_{\ell\geq N} \Prob( Z^N \Zdiest|\ Z_R^N=\ell; R<+\infty; Z_1^N< (1+b)N) \Prob( Z_R=\ell; R<+\infty; Z_1^N<(1+b)N)\\
& & \qquad+ q^{(1+b)N}\\
&\leq&  \sum_{\ell\geq N} \Prob( Z^N \Zdiest|\ Z_R^N=\ell;\ R<+\infty) \Prob(Z_1^N< (1+b)N)+ q^{(1+b)N}\\
&\leq&  \sum_{\ell\geq N} q^\ell e^{-cN}+ q^{(1+b)N} \leq  \frac1{1-q}\exp(-cN)q^N + q^{(1+b)N}.
\end{eqnarray*}
Finally, for $N$ large enough
\begin{eqnarray}\label{Eq:ZNVN}
\Prob(Z^N \Zdies)-\Prob(V_N=0) & =&\Prob(Z^N \Zdies, R< \infty )\nonumber\\
&\leq &q^N\exp(-c'N),
\end{eqnarray}
which finishes the proof.
\end{proof}

By construction $U_N\geq V_N$, we first prove that $U_N$ is close to $V_N$, in the following sense.
\begin{lem}\label{lemUV}
The sequence $(\frac{U_N}{1+ V_N})_{N\geq 1}$ converges to $1$ in probability.
\end{lem}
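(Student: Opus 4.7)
My plan is to prove two things: (a) $V_N$ grows at least like $N$ in probability, and (b) the difference $U_N - V_N$ is only $O(\log N)$ in expectation. Combined via Markov's inequality, these imply $(U_N - V_N)/V_N \to 0$ in probability, whence $U_N/(1 + V_N) \to 1$. Point (a) is easy: iterating \eqref{Eq:majoX1}, the probability that $X^N$ stays stuck at $N$ throughout the first $N$ generations is at least $(1 - e^{-cN})^N$, which tends to $1$; on this event $V_N \geq N$.

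The core of the proof is (b). Because $\{V_N = v\} = \{X^N_v = N\} \cap \{X^N_{v+k} < N \;\forall k \geq 1\}$, the Markov property applied at the deterministic time $v$ shows that the conditional law of $(X^N_{V_N+k})_{k \geq 0}$ given $V_N = v$ is that of an uncensored Galton-Watson process $(Z^N_k)$ started from $N$ and conditioned on the event $\mathcal{A} := \{Z^N_k < N \;\forall k \geq 1\}$; indeed on $\mathcal{A}$ no censoring is ever triggered, so the censored and uncensored dynamics coincide. Since $\mathcal{A} \subseteq \{Z^N \Zdies\}$, I would then invoke the classical fact that conditioning a Galton-Watson process on extinction yields a Galton-Watson process with offspring law $\tilde{\ZZ}(i) = q^{i-1}\ZZ(i)$ and mean $\rho := f'(q) < 1$, hence subcritical. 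A union bound over the $N$ initial lineages then gives $\Prob(\tau > k \mid Z^N \Zdies) \leq N\rho^k$, where $\tau$ is the extinction time of $(Z^N_k)$. Combining this with $\Prob(\mathcal{A}) = q_N$ and Lemma~\ref{lempN} (which gives $q^N/q_N \leq 2$ for $N$ large), one obtains $\Prob(U_N - V_N > k) = \Prob(\tau > k \mid \mathcal{A}) \leq 2 N \rho^k$, and summing over $k$ yields $\E[U_N - V_N] = O(\log N)$.

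The main subtlety lies in step (b): since $V_N$ is not a stopping time, its conditional law is not as immediate as for a stopping time, and the set-theoretic identity above is what makes the argument tractable. A secondary point is that one conditions on the strictly smaller event $\mathcal{A}$ rather than on plain extinction $\{Z^N \Zdies\}$, but this costs only the factor $q^N/q_N = 1+o(1)$ provided by Lemma~\ref{lempN}, and so does not affect the $O(\log N)$ bound.
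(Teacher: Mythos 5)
Your argument is correct and follows essentially the same route as the paper: both identify the conditional law of the post-$V_N$ trajectory, via the Markov property and the identity $\{V_N=v\}=\{X^N_v=N\}\cap\{X^N_{v+k}<N,\ \forall k\geq1\}$, with that of an uncensored Galton--Watson process started at $N$ and conditioned never to return to $N$; both then pass from this conditioning to the (larger) extinction event at a cost controlled by Lemma~\ref{lempN}, and exploit the exponential decay of the conditioned survival probability together with a union bound over the $N$ independent lineages. The variations are cosmetic: you make the decay explicit via the subcritical dual offspring $\tilde{\ZZ}(i)=q^{i-1}\ZZ(i)$ of mean $\rho=f'(q)<1$ where the paper cites the equivalent fact $|q-f_K(0)|\leq e^{-cK}$; you route through $\E[U_N-V_N]=O(\log N)$ plus Markov's inequality where the paper keeps the exponential tail $\Prob(U_N-(V_N+1)>K)\leq 2Ne^{-cK}/q$ directly; and you show $V_N\geq N$ w.h.p.\ by iterating \eqref{Eq:majoX1} where the paper uses a union bound over possible values of $V_N$. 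All of these are equally valid and, indeed, deliver the same convergence in probability.
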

\begin{proof}
By definition we have $U_N\geq V_N +1$. For $\eps>0$,
\begin{eqnarray*}
 \Prob\left(\frac{U_N}{1+V_N}-1 \leq -\eps\right)\leq\Prob(U_N-(V_N+1)>\sqrt{\eps} N)+\Prob\left(U_N<\frac1{\sqrt{\eps}} N\right).
\end{eqnarray*}

We study the first term using a standard technique in branching processes theory:
$$
\Prob((Z_k^1)\text{ dies };Z_K >0)=q-f_K(0),
$$
where $f_K$ is both the $K$-th iterate of $f$ and the generating function of $Z_K$. Our assumptions imply
that $|q-f_K(0)|\leq \exp(-cK)$ for some $c>0$ (\cite{MR2047480},Th.1,chap.I.11).

By the Markov property, the process $(X^N_{k+\ell})_{\ell\geq 0}$ conditioned to $X^N_k=N$ has the same law as $(X^N_\ell)_{\ell\geq 0}$, it follows that
$$
\Prob(U_N-(V_N+1) > K) = \Prob(U_N >K +1 \mid V_N=0).
$$
Recall \eqref{Eq:ZNVN} :
\begin{eqnarray*}
\Prob(V_N=0) \geq \Prob(Z^N \Zdies) - 2q^N\exp(-c'N) \geq q^N/2
\end{eqnarray*}
for $N$ large enough.
We then write
\begin{eqnarray}
\Prob(U_N >K +1\mid V_N=0)&=&\Prob(Z_{K+1}^N>0\mid Z_k^N <N \text{ for each }k\geq 1 )\notag\\
&\leq &\Prob(Z_{K+1}^N>0\mid (Z_k^N) \hbox{ dies out })\times\frac{\Prob\left((Z_k^N) \hbox{ dies out }\right)}{\Prob(Z_k^N <N \text{ for each }k\geq 1)}\notag\\
&\leq &N\Prob(Z_{K+1}^1>0\mid (Z_k^N) \hbox{ dies out })\times \frac{q^N}{q^N/2}\notag\\
&\leq &2N\frac{\exp(-cK)}{q},\label{Eq:MajoUN-VN}
\end{eqnarray}
where we finally used (\cite{MR2047480}, Sec. I.11). We obtain 
\begin{equation*}
 \Prob(U_N-V_N-1>\sqrt{\eps} N)\leq CN\exp(-c\sqrt{\eps} N)
\end{equation*}
for some $C$ and $c>0$ and for any $N\geq 1$ and $\eps>0$.

\noindent
The second term is handled thanks to \eqref{Eq:Majo_pN} as follows. For $N> 1/\eps$,
\begin{eqnarray*}
\Prob\left(U_N<\lf \frac1{\sqrt{\eps} } N\rf\right)&\leq& \Prob\left(V_N<\lf \frac1{\sqrt{\eps}} N\rf \right)\\
&\leq& \Prob\left(\exists v\leq \lf \frac1{\sqrt{\eps}} N\rf , V_N=v\right)\\
&\leq& \Prob\left(\exists v\leq \lf \frac1{\sqrt{\eps}} N\rf , X_v^N=N \text{ and } X_{v+1}^N<N\right)\\
&\leq&\frac1{\sqrt{\eps}} N \exp(-cN)
\end{eqnarray*}
for a certain $c>0$. Letting $N$ go to infinity finishes the proof.
\end{proof}

We introduce another process and an associated time whose properly renormalized law will converge and which is equivalent to $V_N$.
Let $A_k$ be the $k$-th passage in $N$ of the censored process: $A_0=0$ and for $k\geq0$, $A_{k+1}=\inf\{\ell >A_k, X_\ell^N=N\}$,
with the usual convention that $\inf(\emptyset)=+\infty$. Let $T$ be the survival time of $A$ :
$$
T=\sup\{k\geq 0, A_k<\infty\}.
$$
It is clear by contruction that $T$ has the law of $\mathcal{G}(q_N)-1$, where $\mathcal{G}(r)$ is a geometric of parameter $r$.
By the Markov property of our branching process, the variables $A_{k+1}-A_k$ are independent and identically distributed. With high probability, $A_{k+1}=A_k+1$, we can be more precise:
\begin{lem}\label{lemykn}
There exists $c>0$ such that for all $N$
$$
\Prob(A_{k+1}>A_k+1)\leq\exp(-c N).
$$
There exists $c,C>0$ such that for all $K\geq 1$, for all $N$ large enough,
$$
 \Prob(A_k+K< A_{k+1}<\infty)\leq C\exp\left(-c\frac{K}{\log N}-cN\right).
$$
\end{lem}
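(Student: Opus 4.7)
For the first bound I would apply the strong Markov property of $(X_\ell^N)$ at the stopping time $A_k$: on $\{A_k<\infty\}$, the shifted process $(X_{A_k+\ell}^N)_{\ell\geq 0}$ is a fresh copy of $(X_\ell^N)$ started at $N$, so
\[
\Prob(A_{k+1} > A_k + 1) \leq \Prob\bigl(X_1^N \neq N \bigm| X_0^N = N\bigr) \leq \exp(-cN)
\]
by \eqref{Eq:majoX1}.

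For the second bound, the same Markov argument reduces the problem to controlling $\Prob(K<A_1<\infty\mid X_0^N=N)$. The factor $\exp(-cN)$ will come from $\{A_1>1\}\subset\{X_1^N<N\}$ and \eqref{Eq:majoX1}. Conditionally on $X_1^N=x$ with $1\leq x<N$, the censored process coincides with the underlying Galton-Watson process $(Z_\ell)$ started at $x$ up to the first time the latter leaves $\{1,\dots,N-1\}$. It therefore suffices to show, uniformly in $x\in\{1,\dots,N-1\}$,
\[
\Prob\bigl(Z_\ell \in \{1,\dots,N-1\} \text{ for all } \ell=0,\dots,K \bigm| Z_0=x\bigr) \leq C\exp(-cK/\log N).
\]

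The core ingredient is a \emph{batch lemma}: there exist constants $C_0,p>0$ such that, for $N$ large and every $x\in\{1,\dots,N-1\}$, the Galton-Watson process started at $x$ reaches $\{0\}\cup[N,\infty)$ within $C_0\log N$ generations with probability at least $p$. I would split into two regimes. If $x\leq x_0$ for a fixed threshold, extinction in $C_0\log N$ generations has probability at least $f_{C_0\log N}(0)^{x_0}\geq(q/2)^{x_0}$, using $f_k(0)\to q$ exponentially fast from below. If $x\geq x_0$, I iterate Lemma~\ref{lemsimple}: at each generation the population grows by a factor $\geq 1+b$ with probability $\geq 1-\exp(-cZ_\ell)$, so a union bound over the $\lceil \log(N/x_0)/\log(1+b)\rceil=O(\log N)$ steps needed to cross $N$ gives
\[
\Prob\bigl(Z_\ell \geq N \text{ for some } \ell\leq C_0\log N \bigm| Z_0=x\bigr) \geq 1-\sum_{j\geq 0}\exp\bigl(-cx_0(1+b)^j\bigr) \geq \tfrac{1}{2},
\]
provided $x_0$ is large enough (the tail of the series is dominated by its first term $\exp(-cx_0)$).

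With the batch lemma in hand, iterating through $\lfloor K/(C_0\log N)\rfloor$ disjoint windows of length $C_0\log N$ and invoking the Markov property at their endpoints yields $(1-p)^{\lfloor K/(C_0\log N)\rfloor}\leq C\exp(-cK/\log N)$ for the probability of staying in $\{1,\dots,N-1\}$ throughout, which combined with the $\exp(-cN)$ cost of having $A_1>1$ in the first place gives the desired estimate. The delicate step I anticipate is the large-$x$ regime of the batch lemma: Lemma~\ref{lemsimple} only controls a single step, and the iteration works precisely because the failure probabilities $\exp(-cx_0(1+b)^j)$ decay doubly exponentially along the trajectory, so the union bound over $O(\log N)$ steps remains summable rather than costing an extra $\log N$ factor.
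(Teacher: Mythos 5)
Your plan is correct and, once the details are filled in, gives a valid proof; the first bound and the overall strategy (reduce to confinement in $\{1,\dots,N-1\}$, iterate a uniform escape bound over $O(\log N)$-length windows, and pay $\exp(-cN)$ for the event $\{A_1>1\}$) match the paper. Where you diverge is in how you establish the uniform-in-$x$ escape probability. The paper invokes (\cite{MR2047480}, Th.~3, Chap.~I.10): for $1<\mu_\star<\mu$, the ratio $Z_k/\mu_\star^k$ tends to infinity with positive probability, which immediately gives $\delta,\beta>0$ with $\Prob(Z_k^n>\delta\mu_\star^k)\geq\Prob(Z_k^1>\delta\mu_\star^k)\geq\beta$ for all $k,n\geq1$; taking $m=\lceil\log(N/\delta)/\log\mu_\star\rceil$ (so $\delta\mu_\star^m\geq N$), every window of length $m$ escapes upward with probability $\geq\beta$ from any starting point, and there is no case split. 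Your batch lemma is instead built by hand: you split $\{1,\dots,N-1\}$ at a threshold $x_0$, obtain extinction for $x\leq x_0$ from the elementary convergence $f_K(0)\uparrow q$ (giving probability $\geq(q/2)^{x_0}$), and for $x\geq x_0$ you iterate Lemma~\ref{lemsimple} itself, relying on the doubly exponential decay of the per-step failure probabilities $\exp(-cx_0(1+b)^j)$ to keep their sum below $1/2$. This is more self-contained---it avoids the Athreya--Ney almost-sure growth theorem and uses only Lemma~\ref{lemsimple} plus $f_K(0)\to q$---at the cost of the two-regime argument, whose delicacy (the union bound over $O(\log N)$ steps must not cost a factor $\log N$) you correctly flag and correctly handle. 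Both routes then yield the same $(1-p)^{\lfloor K/(C_0\log N)\rfloor}\leq C\exp(-cK/\log N)$ and multiply it by $\exp(-cN)$ in the same way.
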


\begin{proof}
The first assertion is just \eqref{Eq:majoX1}.
The second one is a consequence of the following inequality: for any $0< i< N$
\begin{equation}\label{eqyk2}
\Prob(0<Z_k^i<N \text{ for all }k\leq K)\leq C\exp\left(-c\frac{K}{\log N}\right).
\end{equation}
To prove so, take $1<\mu_\star <\mu \leq +\infty$. The sequence
$Z_k/\mu_\star^k$ tends to infinity with positive probability (see \cite{MR2047480},Th.3 chap.I.10). 
Hence we have positive constants $\delta,\beta$ such that, for any integers $k,n$,
\begin{equation}\label{Eq:Depassement}
\mathbb{P}(Z_k^n > \delta \mu_\star^k)\geq \mathbb{P}(Z_k^1 > \delta \mu_\star^k) \geq \beta.
\end{equation}
Now, set $m =\lceil \log(N/\delta)/\log(\mu_\star)\rceil $. We assume that $N$ is large enough, such that $m\geq 1$.
We first assume that $K> m$.
\begin{multline*}
\set{0<Z_k<N \text{ for all }k\leq K}
\subset 
\set{Z_m<N} \cap \set{Z_m>0 \text{ and }Z_{2m} <N} \\
\cap\set{Z_{2m}>0 \text{ and }Z_{3m} <N} \cap \dots \cap
\set{Z_{\lf K/m\rf m-m}>0 \text{ and }Z_{\lf K/m\rf m} <N}.
\end{multline*}
Thanks to \eqref{Eq:Depassement}, the probability of the right-hand side is smaller than $(1-\beta)^{\lf K/m\rf}$.
If $K\leq m$ then $K/\log N\leq 2/\log\mu^\star$ for $N$ large enough. By choosing $C$ large enough, the right-hand side in 
\eqref{eqyk2} is greater than one, and the claimed inequality also holds.

To finish the proof of the lemma, it suffices to remark that for $K\geq 1$,
\begin{align*}
 \Prob(A_k+K< A_{k+1}<\infty)&\leq \Prob(A_k+1< A_{k+1}<\infty)\Prob(0<Z_\ell^{N} < N \ \forall \ell\leq K)\\
&\leq C\exp(-cN) \exp\left(-c'\frac{K}{\log N}\right).
\end{align*}
\end{proof}
\begin{lem}\label{lemlim}
The sequence $\displaystyle{\left(\frac{1+V_N}{1+T}\right)_{N\geq 1}}$ converges to one in probability.
\end{lem}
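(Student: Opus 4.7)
The strategy is to decompose $V_N$ as a sum over the successive visits of the censored process to the level $N$, and to show that almost all of these visits are separated by exactly one time step, so that $V_N$ is very close to $T$.

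By construction $V_N = A_T$, so writing $\xi_k = A_k - A_{k-1}$ for $1 \leq k \leq T$ gives
\[
V_N = \sum_{k=1}^{T} \xi_k.
\]
By the strong Markov property applied at each time $A_k$, and since $X^N_{A_k}=N$ by definition, the law of $A_{k+1}-A_k$ given $A_k<\infty$ is that of $A_1$, which is $+\infty$ with probability $q_N$ and finite with probability $1-q_N$. Consequently $T$ has the law of $\mathcal{G}(q_N)-1$, and conditionally on $\{T=t\}$ the variables $\xi_1,\dots,\xi_t$ are i.i.d. with the law of $\xi := A_1$ conditioned on $A_1<\infty$. Note also that $\xi_k \geq 1$ for every $k$, so $V_N \geq T$, which already gives the lower bound $(1+V_N)/(1+T)\geq 1$.

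It remains to control the upper bound. First I would estimate $\mu_N:=\E[\xi-1]$ using the two-sided tail of Lemma~\ref{lemykn}. Summing the tail bound, together with $q_N\leq 1/2$ for $N$ large enough (which follows from \eqref{Eq:Majo_pN}), yields
\[
\mu_N \;=\; \sum_{K\geq 1}\Prob(\xi>K) \;\leq\; \frac{1}{1-q_N}\sum_{K\geq 1} C\exp\!\left(-c\tfrac{K}{\log N}-cN\right) \;\leq\; C'\, e^{-cN}\log N,
\]
where the geometric-like sum is bounded by a constant times $\log N$. In particular $\mu_N \to 0$ as $N\to\infty$.

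Conditionally on $T$, the variable $V_N-T = \sum_{k=1}^T (\xi_k-1)$ is a sum of $T$ i.i.d. nonnegative copies of $\xi-1$, so $\E[V_N-T\mid T] = T\mu_N$. Markov's inequality then gives, for any $\eps>0$,
\[
\Prob\!\left(\frac{1+V_N}{1+T}-1 > \eps\right)
\;\leq\; \frac{1}{\eps}\,\E\!\left[\frac{V_N-T}{1+T}\right]
\;=\; \frac{1}{\eps}\,\E\!\left[\frac{T\mu_N}{1+T}\right]
\;\leq\; \frac{\mu_N}{\eps}\;\xrightarrow[N\to\infty]{}0.
\]
Combined with $(1+V_N)/(1+T)\geq 1$, this proves convergence in probability to $1$.

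The main obstacle I expect is bookkeeping the conditioning on $\{A_1<\infty\}$ when extracting the i.i.d. representation of the gaps: the raw bound of Lemma~\ref{lemykn} on $\Prob(K<A_{k+1}-A_k<\infty)$ must be divided by $1-q_N$ to give the conditional tail, and one needs \eqref{Eq:Majo_pN} to ensure that this division only costs a constant factor for $N$ large. Everything else is a routine Markov estimate once $\mu_N=o(1)$ is in hand.
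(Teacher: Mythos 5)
Your decomposition $V_N = A_T = \sum_{k=1}^T \xi_k$ and the identification of the conditional law of the gaps are exactly the same as the paper's (modulo a harmless off-by-one in the paper's indexing). The genuine difference is in how you control the fluctuation of $\sum (\xi_k-1)$ relative to $T$. The paper first shows $\Prob(T < 1/\sqrt{q_N})$ is negligible, and on the complementary event applies a sub-exponential concentration inequality (Petrov, Sec.~III.4) to $\sum \widetilde A_k - \E\widetilde A_k$, summing over $t \geq 1/\sqrt{q_N}$. You instead apply Markov's inequality directly to the nonnegative random variable $(V_N-T)/(1+T)$, using that conditionally on $T$ its expectation is $T\mu_N/(1+T) \leq \mu_N$, so the whole thing is bounded by $\mu_N/\eps$. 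This is a strictly more elementary argument: it needs only $\mu_N \to 0$ (which follows from summing the tail in Lemma~\ref{lemykn}), requires no concentration inequality, no geometric lower bound on $T$, and no reference to \eqref{eqTqN}. The paper's route would be needed if one wanted an almost-sure or quantitative rate-of-convergence statement, but for convergence in probability your one-line Markov bound suffices and is cleaner. The only bookkeeping point you correctly flag, dividing the unconditional tail by $1-q_N \geq 1/2$, is handled; the argument is complete.
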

\begin{proof}

By construction we have $T\leq V_N$. In order to bound the difference, we first notice that, $T$ being a $\mathcal{G}(q_N)-1$, we have for large enough $N$ the bound
\begin{equation}\label{eqTqN}
\Prob(T < 1/\sqrt{q_N})\leq (1/\sqrt{q_N}+1)\Prob(T=0)\leq (1/\sqrt{q_N}+1)q_N \leq 2\sqrt{q_N}\leq 3q^{N/2},
\end{equation}
where we finally used Lemma \ref{lempN}. We now write
$$
V_N\stackrel{\text{(law)}}{=} \sum_{k=0}^T A_k \stackrel{\text{(law)}}{=} T+ \sum_{k=0}^T \widetilde A_k,
$$
where the $(\widetilde A_k)$ is a sequence of i.i.d. random variables with the same law as $A_1-1$ conditioned by $A_1<\infty$, and the sequence
is independent from $T$. 

Now take $\eps>0$. By Lemma~\ref{lemykn}, for $N$ large enough we have $\E(\widetilde A_1)\leq \eps/2$.
\begin{align*}
\Prob\left(V_N+1-(T+1)\geq\right.  \eps &\left. (T+1)\right) 
= \Prob\left(\sum_{k=0}^{T}\widetilde A_k\geq \eps (T+1)\right)\\
&\leq  \Prob\left(\sum_{k=0}^{T}\widetilde A_k-\E(\widetilde A_k)\geq (T+1)\eps /2\right)\\
&\leq \Prob\left(\sum_{k=0}^{T}\widetilde A_k-\E(\widetilde A_k)\geq \eps (T+1)/2 ; T\geq 1/\sqrt{q_N}\right)+ \Prob\left(T< 1/\sqrt{q_N}\right)\\
&\leq \sum_{t\geq 1/\sqrt{q_N}} \Prob\left(\sum_{k=0}^{t}\widetilde A_k-\E(\widetilde A_k)\geq \eps (t+1)/2\right)+ \Prob\left(T< 1/\sqrt{q_N}\right)\\
&\leq \sum_{t\geq 1/2\sqrt{q^N}} \Prob\left(\sum_{k=0}^{t}\widetilde A_k-\E(\widetilde A_k)\geq \eps (t+1)/2\right)+ \Prob\left(T< 1/\sqrt{q_N}\right),  
\end{align*}
where we finally used Lemma \ref{lempN}.
The second term on the right-hand side goes to zero thanks to \eqref{eqTqN}. Let us now handle the sum. Using Lemma \ref{lemykn} we have 
\begin{equation*}
\Prob\left(\widetilde A_k \geq K\right)=\Prob(K\leq A_1<\infty)\Prob(A_1 <\infty)^{-1}\leq 2\exp(-cK/\log N)
\end{equation*}
for a certain $c>0$.
From such a tail, we deduce (see \cite{MR0388499}, sec.III.4) that
there exists $c>0$ such that for all $t$, $\eps$ and $N$,
$$
\Prob\left(\sum_{k=0}^{t}\widetilde A_k -\E(\widetilde A_k)\geq \eps t/2\right)\leq \exp\left(-c\eps \frac{t}{\log N}\right),
$$
and the sum goes to zero.
\end{proof}

\begin{proof}[End of proof of Theorem \ref{th1}]
We now write
$$
U_Nq^N =\frac{U_N}{V_N+1} \frac{q^N}{q_N} q_N (T+1)\frac{V_N+1}{T+1},
$$
where, when $N$ goes to infinity,

\begin{tabular}{ r l l }
$U_N/( V_N+1)$  & $\to 1$  & (in prob.) by Lemma \ref{lemUV},\\
$q^N /q_N$  & $\to 1$  & by Lemma \ref{lempN},\\
$q_N/ (T+1)$            & $\to \mathcal E(1)$ & (in law) since $T$ is geometric with parameter $q_N$,\\
$(V_N+1)/(T+1)$    & $\to 1$ & (in prob.) by Lemma \ref{lemlim}.
\end{tabular}

\noindent
This proves the convergence in distribution. For the convergence in mean, recall that one may write
$$
\mathbb{E}[V_N]=\mathbb{E}[ T+ \sum_{k=0}^T \widetilde A_k]=\mathbb{E}[T]\left(1+ \mathbb{E}[\widetilde A_1]\right)=q^N(1+\mathrm{o(1)}),
$$
thanks to Lemma \ref{lempN} and \ref{lemykn}. Since $\mathbb{E}[U_N]=\mathbb{E}[V_N]+\mathbb{E}[U_N-V_N]$, we finally obtain with \eqref{Eq:MajoUN-VN}
\begin{equation}\label{Eq:EsperanceUnVn}
\mathbb{E}[U_N]\sim \mathbb{E}[V_N]\sim (1/q)^N,
\end{equation}
which will be useful in the next section.
\end{proof}

\section{Application: Branching-selection particle system}\label{secdeux}

We now describe more precisely the connection with the branching-selection process defined in the introduction, in order
to exploit the results of the previous section.
Recall that $Y^N_k(\ell)$ is the number of particles at time $k$ at the position $\ell$ and that
$v_N:=\lim_{k\to\infty} (\max Y_k^N)/k$ (a.s.).

The connection with the censored Galton-Watson Process in is the following Lemma, whose proof is immediate by construction of the process.
\begin{lem}
In the branching-selection process with a pair $(\ZZ, \ZZ')$ of laws on $\N$, 
the number of particles that are at the rightmost possible position has the law of a censored Galton-Watson process, 
when the offspring distribution is $\ZZ$ :
$$
(Y^N_k(k))_{k\geq 0} \stackrel{\text{law}}{=}(X^N_k)_{k\geq 0}.
$$
\end{lem}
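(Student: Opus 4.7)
The plan is to establish the equality in law by a direct coupling: I would construct both processes on the same probability space, driven by the same i.i.d.\ family of $\ZZ$-offspring, and verify by induction on $k$ that $Y^N_k(k) = X^N_k$ almost surely. This gives the desired equality of joint laws.

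The base case is immediate, since $Y^N_0(0) = N = X^N_0$. For the induction step, the key geometric observation is that the walk has increments in $\{0,1\}$: each particle moves forward by at most one step per unit of time. Consequently, any particle occupying the maximal possible position $k+1$ at time $k+1$ must be a $\ZZ$-offspring of a particle that sat at position $k$ at time $k$. The $\ZZ'$-offspring land at their parent's position, and the progeny of particles at positions $\leq k-1$ land at positions $\leq k$; none of these contribute at height $k+1$. Hence the pre-selection count at position $k+1$ is exactly $S = \sum_{i=1}^{Y^N_k(k)} X_{k,i}$, with the $X_{k,i}$ i.i.d.\ copies of $\ZZ$, independent of the past.

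Next I would handle the selection step. The pool of candidate particles at time $k+1$ contains $S$ particles at height $k+1$ and all other particles at heights $\leq k$, so the selection of the $N$ rightmost keeps $\min(N,S)$ particles at $k+1$. This yields the recursion
$$
Y^N_{k+1}(k+1) = \min\!\left(N, \sum_{i=1}^{Y^N_k(k)} X_{k,i}\right),
$$
which is precisely the one-step update of $X^N_{k+1}$ given $X^N_k = Y^N_k(k)$. The absorbing behaviour at $0$ is also compatible: once $Y^N_k(k)=0$, one has $\max Y^N_k \leq k-1$, and since positions grow by at most one per step, $Y^N_j(j)=0$ for every $j\geq k$, matching the convention $X^N_{k+1}=0$ when $X^N_k=0$.

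There is no real obstacle in this proof. The only point worth underlining is the reliance on the displacement distribution being supported in $\{0,1\}$, which is what decouples the evolution at the leading edge from the bulk of the population and allows it to behave as a self-contained censored branching process with offspring $\ZZ$.
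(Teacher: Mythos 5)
Your proof is correct and supplies precisely the details the paper leaves implicit: the paper simply states that the lemma is ``immediate by construction of the process'' and gives no argument. Your coupling-plus-induction route is the natural way to make that assertion precise, and you correctly identify the two load-bearing facts: increments lie in $\{0,1\}$, so only $\ZZ$-children of leading-edge particles can reach height $k+1$; and $\ZZ+\ZZ'\ge 1$ a.s.\ guarantees the pre-selection pool always has at least $N$ particles, so the selection retains exactly $\min(N,S)$ at the leading edge.
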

In particular, if we define $V^1$ as the last time at which the $N$ particles are at the rightmost possible location:
$$
V^1=\max\set{k; Y^N_k(k)=N},
$$
then $V^1\stackrel{\text{law}}{=}V_N$, where $V_N$, defined in the previous section, is the last time at which the censored Galton-Watson hits $N$.

We now are ready to estimate $v_N$.

\begin{lem} For all integer $N\geq 1$,\label{lemvnv}
\begin{equation*}
  v_N\geq 1-\frac1{E(V_N)+1},
\end{equation*}
where the progeny distribution defining $V_N$ is $\ZZ$.
\end{lem}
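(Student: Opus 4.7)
The plan is a regenerative coupling argument. By the preceding lemma, $(Y_k^N(k))_{k\ge 0}$ is distributed as the censored Galton--Watson process $(X_k^N)_{k\ge 0}$; since $Y_k^N(k)>0$ for every $k<U_N$, we obtain the crucial pathwise identity $\max Y_k^N=k$ for all $k\le V^1$, where $V^1\stackrel{\mathrm{law}}{=}V_N$. In particular $\max Y_{V^1}^N=V^1$ and the configuration of $Y^N$ at time $V^1$ is $N\delta_{V^1}$.

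Next, on an enlarged probability space, I would build an auxiliary process $\hat Y$ by concatenating independent copies of the branching--selection dynamics: the $i$-th copy runs from the configuration $N\delta_{P_{i-1}}$ for a window of length $V_N^{(i)}+1$, where the $V_N^{(i)}$ are iid copies of $V_N$, advancing the max by exactly $V_N^{(i)}$ so that $P_i=P_{i-1}+V_N^{(i)}$ and $T_n=\sum_{i\le n}(V_N^{(i)}+1)$. By the SLLN,
\[
\frac{\max \hat Y_{T_n}}{T_n}\;\longrightarrow\;\frac{\E V_N}{\E V_N+1}\;=\;1-\frac{1}{\E V_N+1}.
\]

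The heart of the argument is then the pathwise domination $\max Y_k^N\ge\max\hat Y_k$. The key input is the monotonicity of the dynamics under the stochastic order on configurations: if $\mu\succeq\nu$ in the sense that $\mu([j,\infty))\ge\nu([j,\infty))$ for every $j$, then coupling the two evolutions by assigning the same reproduction variables $(\ZZ_i,\ZZ'_i)$ to the $i$-th rightmost particle of each preserves $\succeq$ after reproduction (each rank-$i$ offspring of $\mu$ lies at a position $\ge$ the corresponding offspring of $\nu$) and after top-$N$ selection (cumulative mass above each threshold is monotone). Combined with the strong Markov property applied at the honest stopping times $A_i$ of the preceding section and an induction on the number of completed cycles, this yields $\max Y_{T_n}^N\ge P_n$ for every $n$; passing to the limit and using $T_n/n\to\E V_N+1$ together with $\max Y_k^N/k\to v_N$ gives the conclusion.

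The main obstacle is the coupling step: $V^1$ is a \emph{last} hitting time, not a stopping time, so strong Markov cannot be applied at $V^1$ directly. The remedy is to decompose the trajectory via the stopping times $A_i$ and to exploit the geometric structure of the number of finite $A_i$'s, using the monotone coupling above to propagate the dominance across the reset steps between consecutive cycles of $\hat Y$.
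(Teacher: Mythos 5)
Your proposal is essentially the paper's own argument: both build a slower ``reset'' process driven by the same randomness as $Y^N$, whose increments between resets are i.i.d.\ copies of $V_N$, and then read off the bound from the renewal (or SLLN) limit $1-1/(\E V_N+1)$ together with a pathwise domination $\max Y^N_k\ge \max\hat Y_k$. The one place you genuinely differ is the coupling used to propagate domination: you assign the reproduction pair $(\ZZ,\ZZ')$ to the $i$-th \emph{rightmost} particle of each configuration (a rank-based coupling, which cleanly preserves the stochastic order $\mu\succeq\nu$ through branching and through top-$N$ selection), whereas the paper indexes its reproduction variables by $(\text{position},\text{time},\text{index})$ and reuses them for the reset process. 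Your rank-based version is the cleaner and more robust of the two and is what one would write to make the monotonicity airtight, so this is a genuine, if small, improvement.

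On the other hand, the obstacle you flag --- that $V^1$ is a last hitting time rather than a stopping time, so strong Markov ``cannot be applied at $V^1$'' --- is not actually an obstacle in this construction, and your proposed detour through the stopping times $A_i$ is unnecessary. The paper never applies the strong Markov property at $V^1$: the reset process is defined \emph{retrospectively} (set $\tilde Y_{\Gamma^i}=N\delta_{\,\Gamma^i-i}$ and run it forward with the shared randomness), and the i.i.d.\ structure of the cycle lengths $V^{(1)}_N,V^{(2)}_N,\dots$ comes from the fact that the $n$-th cycle of the reset process is a deterministic functional of the reproduction variables with time indices in $[T_{n-1},T_n)$, which are disjoint windows with i.i.d.\ entries. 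No Markov restart at a non-stopping time is needed, and no decomposition via the $A_i$'s is used. You do correctly identify the one delicate point about the reset times themselves, namely that at each reset the dominated process gives up exactly one unit of position, which is what produces the $+1$ in the denominator; you handle this the same way the paper does, via $\max\hat Y_k=k-(\text{number of completed cycles by time }k)$.

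Two small points to tidy if you write this up. First, be careful with the wording ``concatenating independent copies'': $\hat Y$ and $Y^N$ must be built from the \emph{same} underlying reproduction array for the pathwise inequality $\max Y^N_k\ge\max\hat Y_k$ to make sense; the ``independence'' is only between the successive time-windows, not between $\hat Y$ and $Y^N$. Second, at a reset time $T_n$ you need $Y^N_{T_n}\succeq N\delta_{P_n}$, i.e.\ \emph{all} $N$ particles of $Y^N_{T_n}$ at positions $\ge P_n$; this follows from $Y^N_{T_n-1}\succeq N\delta_{P_n}$ (provided by the induction and the monotone coupling) together with the hypothesis $\ZZ+\ZZ'\ge 1$ a.s., which guarantees particles never move left. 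Worth stating explicitly, since it is exactly where the standing assumption on $(\ZZ,\ZZ')$ enters.
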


\begin{proof}
The proof is inspired by that of Proposition 4 in \cite{MR2669438}. 
The main idea is to design a \emph{dominated} process $\tilde{Y}_k$ moving slower than the \emph{true} process $Y_k$.

Let us skip the exponent $N$ in order to lighten the notations, and consider an i.i.d. family $(\ZZ_{\ell,k,i}, \ZZ'_{\ell, k, i})_{\ell\geq 0, k\geq 0, i\in [1,N]}$ 
of integer variables with the same law as $(\ZZ, \ZZ')$. 

We sample the particle system $(Y_k)$ by means of these variables:
the population $Y_k$ at time $k$ being given, we let
\begin{equation*}
T_{k+1}=\sum_{\ell=0}^\infty\sum_{i=1}^{Y_k(\ell)}\left(\ZZ_{\ell, k, i}\delta_{\ell+1}+\ZZ'_{\ell,k,i}\delta_\ell\right)
\end{equation*}
be the locations of the particles after branching. The population $Y_{k+1}$ is then obtained by keeping only the $N$ rightmost particles among $T_{k+1}$.


\begin{figure}
\label{Fig:Particules}
\begin{center}
\includegraphics[width=14cm]{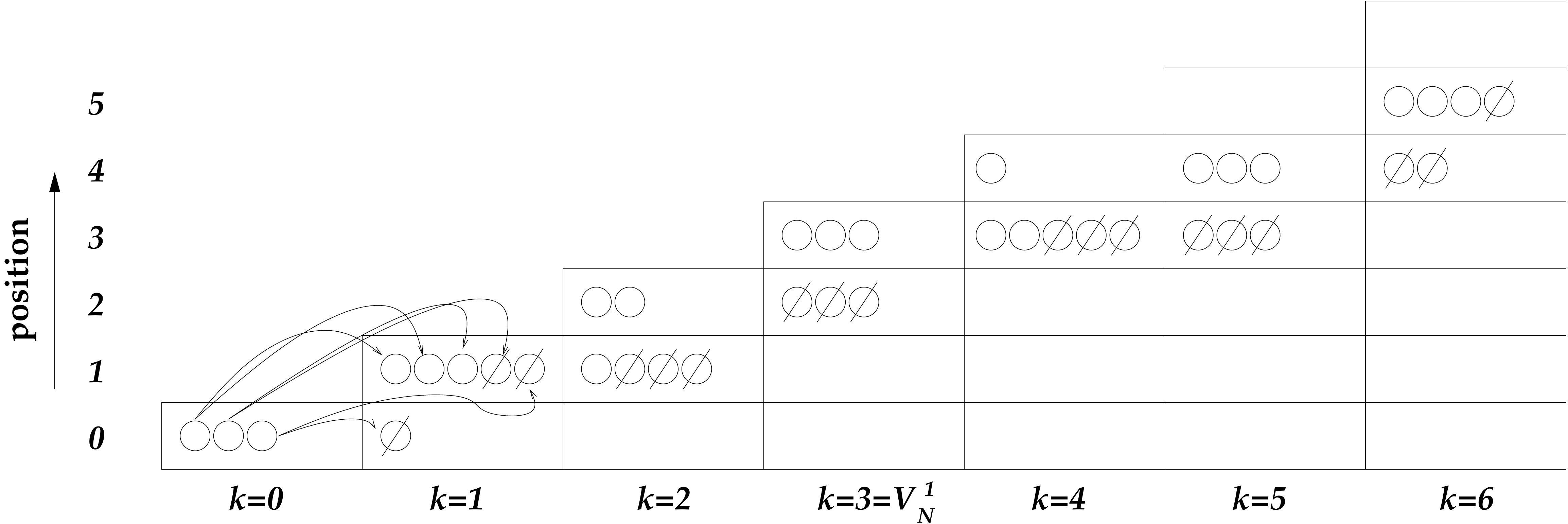}
\caption{A realization of the \emph{true} process $Y$ with $N=3$ particles, for the particular case of the Bernoulli branching-selection process. 
At each step are represented the $6$ particles (after replication) and the $3$ leftmost particles are striked.
Here, we see that $V_1=3$, since from time $k=5$ there can be no more particles at
the rightmost possible position, and because at time $4$ only one particle
is on the rightmost possible position.}
\end{center}
\end{figure}

We modify $Y$ in the following way.
At time $V^1+1$, let continue the process as if all the particles were at position $V^1$ at time $V^1+1$. 
More precisely, set $(Y^{(1)}_k)=(Y_k)$ and introduce a process $\left(Y^{(2)}_k\right)_{k\geq V^1}$ starting from
$$
Y_{V^1+1}^{(2)}=N\delta_{V^1}.
$$
Then, for $k\geq V^1+1$, $Y_{k+1}^{(2)}$ is sampled from $Y_{k}^{(2)}$ exactly as $Y_{k+1}$ is sampled from $Y_{k}^{N}$ (with the same $(\ZZ_{\ell,k,i}, \ZZ'_{\ell, k, i})$'s). 
The main point is that, at time $V^1 +1$, the $N$ particles of $Y^{(1)}_{V^1 +1}$ are at positions $V^1$ or $V^1+1$ while the $N$ particles of $Y^{(2)}_{V^1 +1}$ are at position $V^1$. 
Hence, the point measure $Y^{(1)}_{V^1 +1}$ dominates 
$Y^{(2)}_{V^1 +1}$, and this domination will continue throughout the process, since the same $(\ZZ,\ZZ') $'s are used to generate $Y^{(1)}$ and $Y^{(2)}$.

Now, let $V^2$ be such that $V^1+1+V^2$ is the last time $k$ at which the $N$ particles are at the rightmost possible location (which is position $k-1$) for $Y^{(2)}$:
$$
V^2=\max\set{k; Y^{(2)}_k(k-1)=N}-(V^1+1).
$$
The random time $V^2$ has the same law as $V^1$ and, as a function of $\set{\ZZ_{\ell,k,i},\ZZ'_{\ell, k, i}; \ell >V^1}$, is independent of $V^1$.
We define recursively similar processes $Y^{(3)},Y^{(4)},\dots$ and random variables $V^3,V^4,\dots$. 
Setting also
$$
\Gamma^0=0 ,\quad \Gamma^{i+1}=\Gamma^i +V^i+1,
$$
the $\Gamma^i$'s are renewals. 

Let us introduce a last process $(\tilde{Y}_k)_{k\geq 0}$ as follows:
if $k\in [\Gamma^{i-1},\Gamma^i)$, then
$$
\tilde{Y}_k=Y^{(i)}_k
$$
\begin{figure}
\label{Fig:ParticulesModifiees}
\begin{center}
\includegraphics[width=14cm]{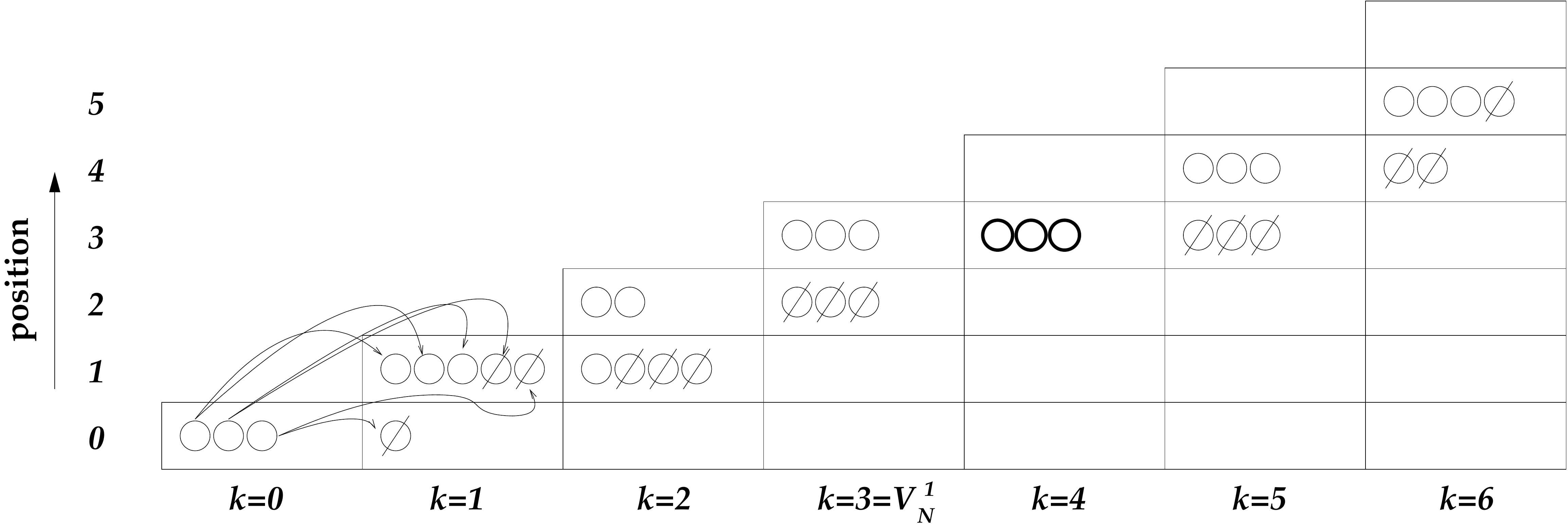}
\caption{A realization of the associated \emph{dominated} process: at time $V^1_N+1=4$, we let the process restart with {\bf all} particles at position $3$.}
\end{center}
\end{figure}

\noindent
One can see in Fig.~1 the first steps of a realization of the process $(Y_k)$, and in Fig.~2 the 
corresponding modificated process $(\tilde{Y}_k)$.
By construction, $Y_k$ dominates $\tilde{Y}_k$ for each $k$, and in particular $\max Y_{k}\geq \max\widetilde Y_{k}$.
Let us also note that at each renewal $\Gamma^i$ the quantity $\max\widetilde Y_{k}$ is decreased by one. 
Set $I_k$ be the renewal process associated to the renewals $\Gamma^i$, that is $I_k$ is the only integer $i$ such that 
$\Gamma^i \leq k< \Gamma^{i+1}.$ On the one hand we have
$$\Gamma^{i+1}-\Gamma^i\stackrel{\text{law}}{=} V^1+1,
$$
so by applying the renewal theorem (see for instance \cite{MR1609153} Chap.3.4) to the renewals $\Gamma^1,\Gamma^2,\dots$, we get
\begin{equation*}
\lim_{k\rightarrow\infty} \frac1k\E(I_k)\rightarrow\frac{1}{\mathbb{E}[V^1]+1}.
\end{equation*}
On the second hand we have
$$
\max\widetilde Y_{k} = k - I_k.
$$
Hence we get
$$
\frac{1}{k}\mathbb{E}[\max Y_{k}]\geq \frac{1}{k}\mathbb{E}[\max\widetilde Y_{k}]\stackrel{k\to\infty}\rightarrow 1-\frac{1}{\mathbb{E}[V^1]+1}, 
$$

To conclude, recall that thanks to the connection with the censored Galton-Watson process $\mathbb{E}[V^1]=\mathbb{E}[V_N]$.
\end{proof}

\begin{lem}\label{lemvnu} For all integer $N\geq 1$,
\begin{equation*}
 v_N\leq 1-\frac1{E(U_N)}
\end{equation*}
\end{lem}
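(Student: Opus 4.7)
The proof mirrors Lemma~\ref{lemvnv}: I construct a modified process $\widehat Y$ which \emph{dominates} $Y^N$ and whose rightmost particle advances strictly faster, so that the asymptotic speed of $\widehat Y$ yields an upper bound on $v_N$.

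Let $U^1 = \min\set{k \geq 1 : Y^N_k(k)=0}$ be the first time at which no particle occupies the rightmost possible position. By the connection with the censored Galton-Watson process stated just before the current lemma, $U^1 \stackrel{\text{law}}{=} U_N$; for $k \leq U^1-1$ the max advances at every step, while at time $U^1$ it fails to advance and the $N$ particles of $Y^N_{U^1}$ are split between positions $U^1-1$ and $U^1-2$. I define $\widehat Y$ iteratively by setting $\widehat Y_k = Y^N_k$ for $k \leq U^1-1$ and, at time $U^1$, resetting to
$$
\widehat Y_{U^1} = N\delta_{U^1-1},
$$
i.e.\ gathering all $N$ particles at the current maximum; then I let $\widehat Y$ evolve according to the branching-selection rule until its next first-failure time, at which point I reset again, and iterate. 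Writing $U^1, U^2, \dots$ for the successive first-failure times and $\Gamma^0=0$, $\Gamma^i = U^1+\cdots+U^i$, the Markov property and the connection with the censored Galton-Watson process imply that $(U^i)_{i \geq 1}$ is i.i.d.\ of law $U_N$.

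Between $\Gamma^{i-1}$ and $\Gamma^i$ the max of $\widehat Y$ advances at every step except the last, whence $\max \widehat Y_{\Gamma^i} = \Gamma^i - i$. The strong law of large numbers applied to $(U^i)$, together with a squeeze for $k \in [\Gamma^{i-1}, \Gamma^i]$ using the monotonicity of $k \mapsto \max \widehat Y_k$, gives
$$
\frac{\max \widehat Y_k}{k} \xrightarrow[k \to \infty]{\text{a.s.}} 1 - \frac{1}{\E[U_N]}.
$$
Combined with a pointwise bound $\max Y^N_k \leq \max \widehat Y_k$, the a.s.\ convergence $\max Y^N_k / k \to v_N$ and dominated convergence (both ratios lie in $[0,1]$) would yield $v_N \leq 1 - 1/\E[U_N]$.

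The remaining and main step is to establish this pointwise bound. I plan to do it by running both processes with the same position-indexed $(\ZZ_{\ell, k, i}, \ZZ'_{\ell, k, i})$'s, exactly as in Lemma~\ref{lemvnv}: at each restart the configuration $\widehat Y = N\delta_{\max}$ tail-sum-dominates $Y^N$ (both have $N$ particles in total, but $\widehat Y$ concentrates all of its mass at the current max while $Y^N$ typically carries some mass one step to the left), and I expect this tail-sum domination to be preserved by the branching-selection dynamics. The max-inequality then follows. The main obstacle is precisely this persistence-of-domination step, because at restart the two configurations are not pointwise comparable ($\widehat Y$ puts no mass at max$-1$), so one cannot simply couple individual particles; this is however the symmetric counterpart of the coupling used in Lemma~\ref{lemvnv}, with the roles of the two processes exchanged, and the same argument should adapt.
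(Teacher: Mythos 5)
Your proof is essentially the paper's own, which disposes of the lemma in one sentence by replacing $V^1$ with $U^1$ and resetting the faster process to $N\delta_{U^1-1}$ at each first-failure time; your identification of $U^1 \stackrel{\text{law}}{=} U_N$, the renewal structure $\Gamma^i = U^1+\cdots+U^i$ with $\max\widehat Y_{\Gamma^i}=\Gamma^i-i$, and the SLLN/squeeze all match. The one obstacle you flag, persistence of the tail-sum domination under the coupling, is the same one the paper takes for granted in Lemma~\ref{lemvnv}; it is cleanest if the i.i.d.\ pairs $(\ZZ,\ZZ')$ are attached to a particle's \emph{rank} (from rightmost to leftmost) at each time step rather than to its position $\ell$: then the $i$-th rightmost particle of $\widehat Y$ always sits weakly to the right of the $i$-th rightmost particle of $Y^N$ and uses identical randomness, so the sorted offspring positions of $\widehat Y$ coordinatewise dominate those of $Y^N$, and keeping the $N$ rightmost preserves this, giving the needed $\max Y^N_k\leq\max\widehat Y_k$ for all $k$.
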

\begin{proof}
The proof is an adaptation of the previous one, replacing $V^1$ by $U^1$, which is the first time at which there is no more particle at the rightmost possible position:
$$
U^1=\min\set{k; Y^N_k(k)=0},
$$ 
(On the example drawn in Fig.~1, one has $U^1=5$.) Then, $U^1$ has the same law as the $U_N$ defined in Section \ref{sectun}. In a similar manner to the previous proof, we let the process restart at time $U^1$ as if all the particles were at position $U^1-1$. The end of the proof is similar.
\end{proof}

We now combine these two estimates of $v_N$ with the results of the previous section in order to prove Theorem \ref{th2} :
$$
\frac1{E(U_N)} \leq  1- v_N \leq \frac1{E(V_N)+1}
$$
and both sides, thanks to \eqref{Eq:EsperanceUnVn}, are equivalent to $q^N$.


\vspace{5mm}

\noindent{\bf Aknowledgement.} We warmly thank J.-B. Gou\'er\'e for many helpful comments on this work.
We are also very thankful to the two referees and the associate editor for recommending numerous improvements.
\bibliography{Galton}

\end{document}